\title{Complex asymptotics of the M\"obius energy gradient of symmetric helix pairs}
\author{Max Lipton}
\newcommand{\Addresses}{{
  \bigskip

  Max Lipton, \textsc{Department of Mathematics, Cornell University}\par\nopagebreak
  \textit{E-mail address}: \texttt{ml2437@cornell.edu}

}}
\begin{document}

\maketitle
\begin{abstract}
The M\"obius energy is a well-studied knot energy with nice regularity and self-repulsive properties. Stationary curves under the M\"obius energy gradient are of significant theoretical interest as they they can indicate equilibrium states of a curve under its own forces. In this paper, we consider stationary symmetric helix pairs under the M\"obius energy. Through methods of complex asymptotics, we characterize the limiting behavior of the M\"obius gradient as the coiling ratio tends to infinity: the gradient will diverge in opposing directions depending on whether the radius is less than or greater than $\frac{1}{2}$.

We conclude by discussing the implications to the more general M\"obius-Plateau energy, where the energy of a curve, or pair of curves, includes the area of the minimal surface bounded by them. Symmetric helix pairs bound a helicoid between them, and applying our result shows that stationary helicoids grow to radius $\frac{1}{2}$ from below as the coiling tends to infinity.
\end{abstract}

\section{Introduction}

Knot energies systematically assign real numbers to knots and links. The M\"obius energy first introduced by O'Hara \cite{ohara91} was designed to act as an infinite potential barrier between knot types in the sense that the energy is finite for smooth, tame knots, but the defining integral blows up to infinity when there is a self-intersection. The M\"obius energy of a knot defined by a tame $C^2$ curve $\gamma: I \to \R^3$ is defined by
\begin{align}
    E(\gamma) &= \iint_{I \times I } \left(  \frac{1}{|\gamma(u) - \gamma(v)|^2} - \frac{1}{D(\gamma(u),\gamma(v))^2} \right) \left|\dot{\gamma}(u)\right|\left||\dot{\gamma}(v)\right|dudv,
    \label{mobiusknot}
\end{align}
where $D$ refers to the intrinsic distance along the knot. The intrinsic distance term acts as a regularization allowing the integral to converge. A major result of Freedman, He, and Wang \cite{freedman1994mobius} showed that the energy is invariant under M\"obius transformations, including spherical inversions, which is where the name comes from. Much of the difficulty in studying the M\"obius energy of knots comes from hard analysis of the intrinsic distance term. However, in the case of two disjoint $C^2$ curves $\gamma_1,\gamma_2: \R \to \R^3$, the M\"obius linking energy has the simpler form of 
\begin{align}
E(\gamma_1, \gamma_2) &= \iint_{\R \times \R} \frac{ |\dot{\gamma_1}(u)| | \dot{\gamma_2}(v) | dudv}{|\gamma_1(u) - \gamma_2(v)|^2}. \label{mobiusenergy}
\end{align}
For the M\"obius energy of links, we do not need to subtract an intrinsic distance term in the integrand for the integral to converge. At a given point $\gamma_1(u)$, the $L^2$ gradient of the M\"obius energy is given by the vector-valued integral
\begin{align}
    G_{\gamma_1, \gamma_2}(u) = 2 \int_\R \left[ \frac{2 P_{\dot{\gamma_1}(u)^\perp}(\gamma_2(v) - \gamma_1(u))}{|\gamma_2(v) - \gamma_1(u)|^2} - \textbf{N}_{\gamma_1(u)} \right] \frac{|\dot{\gamma_2}(v)|dv}{|\gamma_2(v) - \gamma_1(u)|^2}.
    \label{mobiusvar}
\end{align}
The gradient vector field along $\gamma_2$ is defined similarly. The curves are stationary when we have
\begin{align}
    G_{\gamma_1,\gamma_2} &\equiv 0 \label{mobstationary}
\end{align}
across both curves.

The derivation of M\"obius gradient of a knot is given in \cite{freedman1994mobius}, and He \cite{he02} showed the equation also holds in the case of links. Here, $P_{\dot{\gamma_1}(u)^\perp}$ refers to the projection onto the plane normal to the tangent vector at $\gamma_1(u)$, and $\textbf{N}_{\gamma_1(u)}$ refers to the normal vector in the Frenet frame along $\gamma_1$. The gradient is defined similarly along $\gamma_2$ by switching $u$ and $v$. A rough heuristic for interpreting \eqref{mobiusvar} is that curves with high gradient are those in which many rescaled pairwise difference vectors are close to Frenet binormals. The integral projects a rescaled difference vector onto the curve's normal plane, and subtracts the Frenet normal, leaving only the binormal. The rescaling obeys an inverse quartic law, so difference vectors between nearby points of the curve which approximate the binormal well will contribute greatly to the gradient integral. The binormal of a helix points roughly in the direction of its axis, which indicates helices provide a fruitful set of examples in studying the dynamics of the M\"obius energy. 

A general helix $\gamma: \R \to \R^3$ with frequency $\omega > 0$ and radius $A$ is defined by the curve
\begin{align}
\label{helicoid}
    \begin{bmatrix}
    x(t) \\
    y(t) \\
    z(t)
    \end{bmatrix} &=
    \begin{bmatrix}
    A \cos(\omega t) \\
    A \sin(\omega t) \\
    t
    \end{bmatrix}.
\end{align}
The radius is allowed to be negative, and varying $A$ across the real numbers parametrizes a helicoid. The main result of this paper concerns the asymptotics of the M\"obius graident for helix pairs $\gamma_1$ and $\gamma_2$, comprising two helices with radii $A$ and $B$ and a common $\omega$, as $\omega \to \infty$.

For fixed $\omega$, helices are invariant under ``screw" transformations which rotate the $xy$-plane by angle $\omega t$ whilst translating in the $z$ direction by $t$. As the screw transformations are a subgroup of Euclidean isometries (and also of the larger M\"obius group of $S^3$), the gradient flow of the M\"obius energy starting at a helix preserves symmetries under the screw transformations (cf. p.42 of \cite{freedman1994mobius}). Therefore, $G_{\gamma_1,\gamma_2}$ is tangent to their common helicoid, and to calculate the entire M\"obius gradient vector fields along $\gamma_1$ and $\gamma_2$, it suffices to compute them at two particular points along the boundary curves and then apply screw transformations. 

Adapting the variational equation \eqref{mobiusvar} to the case of helix pairs is a matter of calculating all of the components and piecing them together. The author, in joint work with Gokul Nair, carried out this computation in the more general M\"obius-Plateau energy in \cite{ln22}, which is the original motivation of this paper. We discuss the implications to the M\"obius-Plateau energy after our main result. 

The three equations defining the M\"obius gradient vector at $\gamma_1(0)$ are
\begin{align}
    \int_{-\infty}^\infty \left[ \frac{2 \left( B \cos(\omega v) - A \right)}{A^2 - 2AB\cos(\omega v) + B^2 + v^2} - 1 \right] \frac{1}{A^2 - 2AB\cos(\omega v) + B^2 + v^2} dv &= 0 \label{gamma1var}\\
    \int_{-\infty}^\infty \frac{B \sin(\omega v) - \frac{v + \omega^2 A^2 B \sin(\omega v)}{\omega^2 A^2 + 1}}{(A^2 - 2AB\cos(\omega v) + B^2 + v^2)^2}dv &= 0 \nonumber\\
    \int_{-\infty}^\infty \frac{ v - \frac{\omega A B \sin(\omega v) + v}{\omega^2 A^2 + 1}}{\left( A^2 - 2AB\cos(\omega v) + B^2 + v^2 \right)^2} dv &= 0.\nonumber
\end{align}
We refer the reader interested in teh derivation to \cite{ln22}. The last two integrands are odd functions in $v$, which means the integrals are always going to be zero, so only equation \eqref{gamma1var} relevant to us. Through identical computations by taking $v = 0$, noting that in this case we have $\textbf{N}_{\gamma_2(0)} = (-1,0,0)$ and $\textbf{n} = (1, 0, 0)$, we see the first component of the variational equation at $\gamma_2(0)$ is
\begin{align}
     \int_{-\infty}^\infty \left[ \frac{2 \left( A \cos(\omega u) - B \right)}{A^2 - 2AB\cos(\omega u) + B^2 + u^2} + 1 \right] \frac{1}{A^2 - 2AB\cos(\omega u) + B^2 + u^2} du &= 0. \label{gamma2var}
\end{align}
Relabelling the variable of integration and adding \eqref{gamma1var} and \eqref{gamma2var} yields
\begin{equation}
\label{screwsum}
    \int_{-\infty}^\infty \frac{(A + B)(\cos(\omega v) - 1)}{\left(A^2 - 2AB \cos(\omega v) + B^2 + v^2 \right)^2}dv = 0.
\end{equation}
From direct inspection, it is clear that $A = -B$ is a solution, which we call a symmetric helix pair. There exist nonsymmetric solutions to \eqref{screwsum} which can be found numerically, and determining conditions for a nonsymmetric solution to be stationary, remains open.

\section{M\"obius-Stationary Symmetric Double Helices}

 The fact that symmetric double helixes with $A = -B$ satisfy \eqref{screwsum} suggests this class of curve pairs may contain stationary pairs. However, for a symmetric double helix to be stationary, it must also satisfy the difference between equations \eqref{gamma2var} and \eqref{gamma1var}.

This equation is
\begin{equation}
    \label{screwdiff}
  \int_{-\infty}^\infty \frac{A(A+1) + B(B-1) + (A - B -2AB) \cos(\omega v) + v^2}{\left(A^2 - 2AB \cos(\omega v) + B^2 + v^2 \right)^2}dv = 0.
\end{equation}
Substituting $A = -B$ gives
\begin{equation}
    \label{mobsymscrewdiff}
 \bigintsss_{-\infty}^\infty \frac{4B(B-1)\cos^2\left(\frac{\omega v}{2}\right) + v^2}{\left(\left(2B\cos\left(\frac{\omega v}{2}\right)\right)^2  + v^2 \right)^2}dv = 0.
\end{equation}
For $\omega, B > 0$, let $M(\omega, B)$ denote the value of the definite integral in \eqref{mobsymscrewdiff}. Note that for $v^2 > |4B(B-1)|$, the integrand  of $M(\omega,B)$ is always going to be positive. Therefore, the only way for the integral to be nonpositive is for $B$ and $B-1$ to have differing signs in order there to be negative contributions to the integral for small $|v|$. This gives us our first constraint on stationary symmetric helix pairs.
\begin{lemma}
\label{mobiusbound}
    Any stationary symmetric helix pair must have radius $0 < B < 1$.
\end{lemma}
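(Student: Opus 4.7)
The plan is a direct sign analysis of the integrand of $M(\omega,B)$. Write the integrand as $N(v)/D(v)^2$ where
\[
N(v) = 4B(B-1)\cos^2(\omega v/2) + v^2, \qquad D(v) = 4B^2\cos^2(\omega v/2) + v^2.
\]
First I would observe that $D$ is strictly positive on all of $\R$: $D(v) = 0$ would require $v = 0$ and $\cos(\omega v/2) = 0$ simultaneously, but $\cos(0) = 1$. In particular the integrand has no singularities, and since $N(v)/D(v)^2 \sim 1/v^2$ as $|v| \to \infty$, the integral $M(\omega,B)$ converges absolutely. The sign of the integrand therefore coincides pointwise with the sign of $N(v)$.

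Since $B > 0$ is the standing hypothesis built into the definition of $M(\omega,B)$, it suffices to rule out $B \geq 1$. Assume for contradiction that $B \geq 1$; then $B(B-1) \geq 0$, so both summands defining $N(v)$ are nonnegative and $N(v) \geq v^2$ for every $v \in \R$. The integrand is then nonnegative everywhere and strictly positive for every $v \neq 0$, which forces $M(\omega,B) > 0$ and contradicts \eqref{mobsymscrewdiff}. Combining with $B > 0$, we conclude $0 < B < 1$.

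The argument merely formalizes the heuristic stated in the paragraph preceding the lemma: since $v^2$ is the only unambiguously signed contribution to $N$, the integrand can take negative values only when $B(B-1) < 0$, and with $B > 0$ this is equivalent to $B < 1$. The only minor subtlety is ensuring strict positivity on a set of positive measure rather than merely pointwise, but this is immediate from $N(v) \geq v^2 > 0$ for all $v \neq 0$, so there is no genuine technical obstacle.
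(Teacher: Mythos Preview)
Your proof is correct and is essentially the same sign analysis the paper sketches in the paragraph immediately preceding the lemma: if $B(B-1)\geq 0$ the numerator $N(v)$ is nonnegative and strictly positive for $v\neq 0$, forcing $M(\omega,B)>0$. Your write-up is in fact cleaner than the paper's, since you bound $N(v)\geq v^2$ directly rather than splitting into the regions $v^2>|4B(B-1)|$ and $v^2\leq |4B(B-1)|$.
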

Our main theorem is a considerable strengthening of the observation of Lemma \ref{mobiusbound}.
\begin{theorem}
For any $B > 0$,
\begin{equation}
    \lim\limits_{\omega \to \infty} M(\omega, B) = \left\{ \begin{array}{ll}
      -\infty & B < \frac{1}{2} \\[1ex]
      0 & B = \frac{1}{2} \\[1ex]
      \infty & B > \frac{1}{2} \\[1ex]
\end{array}.
\right.
    \label{MwB}
\end{equation}
\end{theorem}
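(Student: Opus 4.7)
The plan is to extract the leading asymptotic of $M(\omega,B)$ by localizing around the real zeros $v_k := (2k+1)\pi/\omega$ of $\cos(\omega v/2)$, where the denominator $D(v) := 4B^2\cos^2(\omega v/2) + v^2$ attains its local minimum $v_k^2$. The algebraic identity $4B(B-1)\cos^2(\omega v/2) + v^2 = \tfrac{B-1}{B}D(v) + \tfrac{1}{B}v^2$ yields $M(\omega,B) = \tfrac{B-1}{B}I_1 + \tfrac{1}{B}I_2$ where $I_1 := \int_{-\infty}^\infty dv/D$ and $I_2 := \int_{-\infty}^\infty v^2\,dv/D^2$. From the complex-analytic perspective suggested by the paper's title, each peak at $v_k$ corresponds to a pair of simple poles of $1/D$ at $v \approx v_k \pm i v_k/(B\omega)$ just off the real axis, and the local contributions are residues at these poles.

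I would then carry out the local analysis. On $|v-v_k| < \pi/\omega$, Taylor expanding $\cos(\omega v/2) \approx \pm\omega(v-v_k)/2$ and $v^2 \approx v_k^2$ yields the Lorentzian $D(v) \approx v_k^2 + B^2\omega^2(v-v_k)^2$. Rescaling by $u = B\omega(v-v_k)/|v_k|$ and using the elementary formulas $\int(1+u^2)^{-2}du = \int u^2(1+u^2)^{-2}du = \pi/2$ evaluates the local contribution of the full integrand to $\tfrac{2B-1}{2B^2|2k+1|}$. Summing over $|2k+1|\lesssim \omega$, the range where the Lorentzian approximation remains valid, gives
\[
M(\omega,B) = \frac{2B-1}{B^2}\sum_{k=0}^{\lfloor \omega/2\rfloor}\frac{1}{2k+1} + O(1) \sim \frac{2B-1}{2B^2}\log\omega,
\]
which establishes $M \to \pm\infty$ for $B \neq 1/2$ with sign matching $\mathrm{sgn}(2B-1)$.

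The critical case $B = 1/2$ requires separate treatment since the leading coefficient vanishes. At $B=1/2$ the integrand satisfies $f(v,\omega v) = -\tfrac{d}{dv}[v/D] + (\omega v\sin(\omega v)/2)/D^2$ with $D = v^2 + \cos^2(\omega v/2)$, so integrating and dropping the vanishing total-derivative term yields
\[
M(\omega,1/2) = \tfrac{\omega}{2}\int_{-\infty}^\infty \frac{v\sin(\omega v)}{(v^2+\cos^2(\omega v/2))^2}\,dv.
\]
The integrand has mean zero in the $2\pi$-periodic fast variable $\omega v$, by the antisymmetry $\sin(2\pi - u) = -\sin u$ against the evenness of $\cos^2(u/2)$. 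A Riemann--Lebesgue analysis with rate --- carried out by Fourier sine expansion in $\omega v$ and integration by parts mode-by-mode against the slowly-varying factor $v/(v^2+\cos^2(\omega v/2))^2$ --- then yields a decay strictly faster than $\omega^{-1}$, and hence $M(\omega, 1/2) \to 0$.

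The main obstacle will be the rigorous control of errors. One must verify that the bulk contribution (from regions between consecutive peaks) and the summed approximation errors in the Lorentzians both remain $O(1)$, so as not to spoil the leading logarithmic coefficient. The $B = 1/2$ case is the most delicate: a single integration by parts against the oscillatory factor only produces the rate $O(\omega^{-1})$, which would yield mere boundedness of $M$; the sharper $o(\omega^{-1})$ rate needed for the limit to be exactly zero requires exposing an additional layer of cancellation, either through a second integration by parts or through a refined estimate on the Fourier coefficients of the periodic factor.
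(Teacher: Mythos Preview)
Your approach is correct and reaches the same divergent series $\tfrac{2B-1}{B^2}\sum_{k\ge 0}\tfrac{1}{2k+1}$ as the paper, but by a genuinely different route. The paper works entirely in the complex domain: it expands the integrand into partial fractions $F_1+F_2+F_3+F_4$, rescales by $2/\omega$ so that the poles of the resulting $\widetilde{F_\pm}$ accumulate near the odd multiples of $\pi/2$, proves a technical lemma pinning down exactly one pole per vertical strip, shows these poles are asymptotically equivalent to their limiting positions, and then sums the residues of the limit functions $\widetilde{F_\pm^\infty}(z)=\mp\,i(2B-1)/(8B^2 z\cos z)$ over a growing rectangular contour. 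Your real-variable Lorentzian localization is the shadow of this same mechanism on the real axis---each peak at $v_k$ is precisely the trace of a near-real pole pair, as you observe---and it trades the pole-counting lemma for the task of bounding the bulk and Lorentzian-approximation errors by $O(1)$. The residue method is tidier once the pole structure is established; your method is more elementary and delivers the explicit growth rate $M(\omega,B)\sim\tfrac{2B-1}{2B^2}\log\omega$. Your handling of $B=\tfrac12$ via the total-derivative identity and a two-scale oscillatory estimate is in fact more careful than the paper's, which at $B=\tfrac12$ simply records that the prefactor $2B-1$ in every limiting residue vanishes, without separately justifying the interchange of the $\omega\to\infty$ limit with the infinite residue sum in that borderline case.
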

The proof of our theorem will make use of complex asymptotics. We will compute a contour integral of a $\omega$-dependent meromorphic function with countably infinite poles. For finite $\omega$, the infinite sum of the contributions of the poles to the integral will converge, as we can see directly from the definition of $M$. However, as $\omega \to \infty$, the terms in the series will tend towards those in a divergent sum (except when $B = \frac{1}{2}$), whose limit is given in \eqref{MwB}.

\begin{proof}
We will take a limit of complex contour integrals around rectangles which will tend towards encompassing the entire upper half plane. In the variable $z$, let $F(z)$ denote the integrand of $M(\omega, B)$, which has partial fraction decomposition
\begin{align}
    F(z) &= -\frac{i(2B - 1)}{4Bz\left(2B\cos\left(\frac{\omega z}{2}\right) - iz \right)} + \frac{i(2B - 1)}{4Bz\left(2B\cos\left(\frac{\omega z}{2}\right) + iz \right)} \nonumber \\
    &\hspace{0.5in}+ \frac{1}{4B\left(2B\cos\left(\frac{\omega z}{2}\right) - iz \right)^2} -
    \frac{1}{4B\left(2B\cos\left(\frac{\omega z}{2}\right) + iz \right)^2}.
    \label{partialfrac}
\end{align}
Label these four terms $F_1(z),F_2(z),F_3(z)$, and $F_4(z)$ respectively. Also define $F_- = F_1 + F_3$ and $F_+ = F_2 + F_4$, where the subscript denotes the sign of $iz$ in the denominator. Note that each of these functions is in reality a family of functions depending on $\omega$ and $B$. Throughout this proof, we will fix $B$, but take freely vary $\omega$ and in some instances we will pass to the limit $\omega \to \infty$. The first two terms have a pole at $z = 0$, but as we can see that the integrand in \eqref{mobsymscrewdiff} is well defined at $0$ for $B \neq 0$, this singularity is removable from the sum.

\begin{figure}[h]
\captionsetup[subfigure]{justification=centering}
    \centering
    \begin{subfigure}{0.49\textwidth}
    \centering
    \includegraphics[width=3.05in]{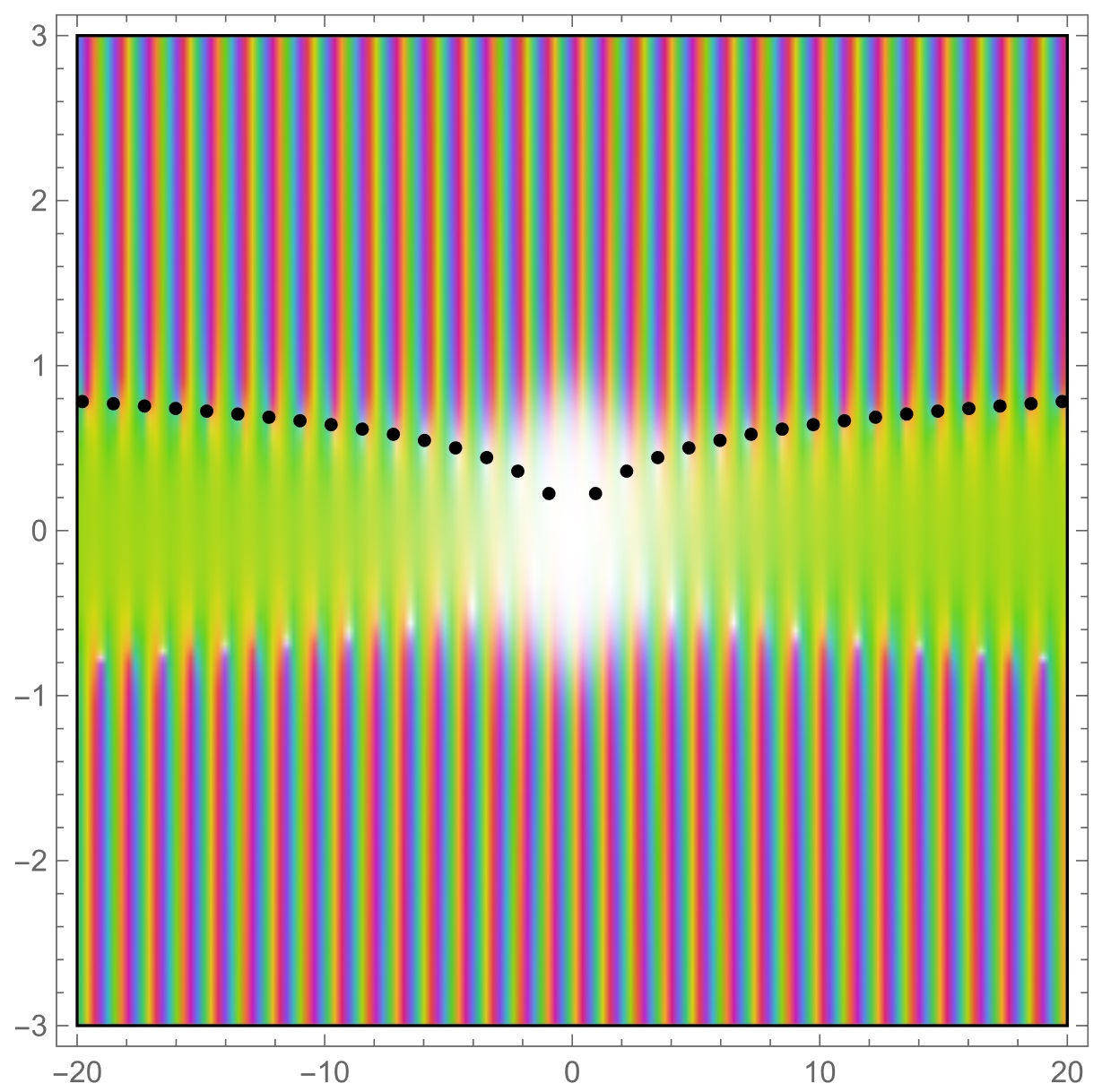}
    \caption{A complex plot of $F_1$ along with locations of the poles in the upper half plane.}
    \end{subfigure}%
    \begin{subfigure}{0.49\textwidth}
    \centering
    \includegraphics[width=3in]{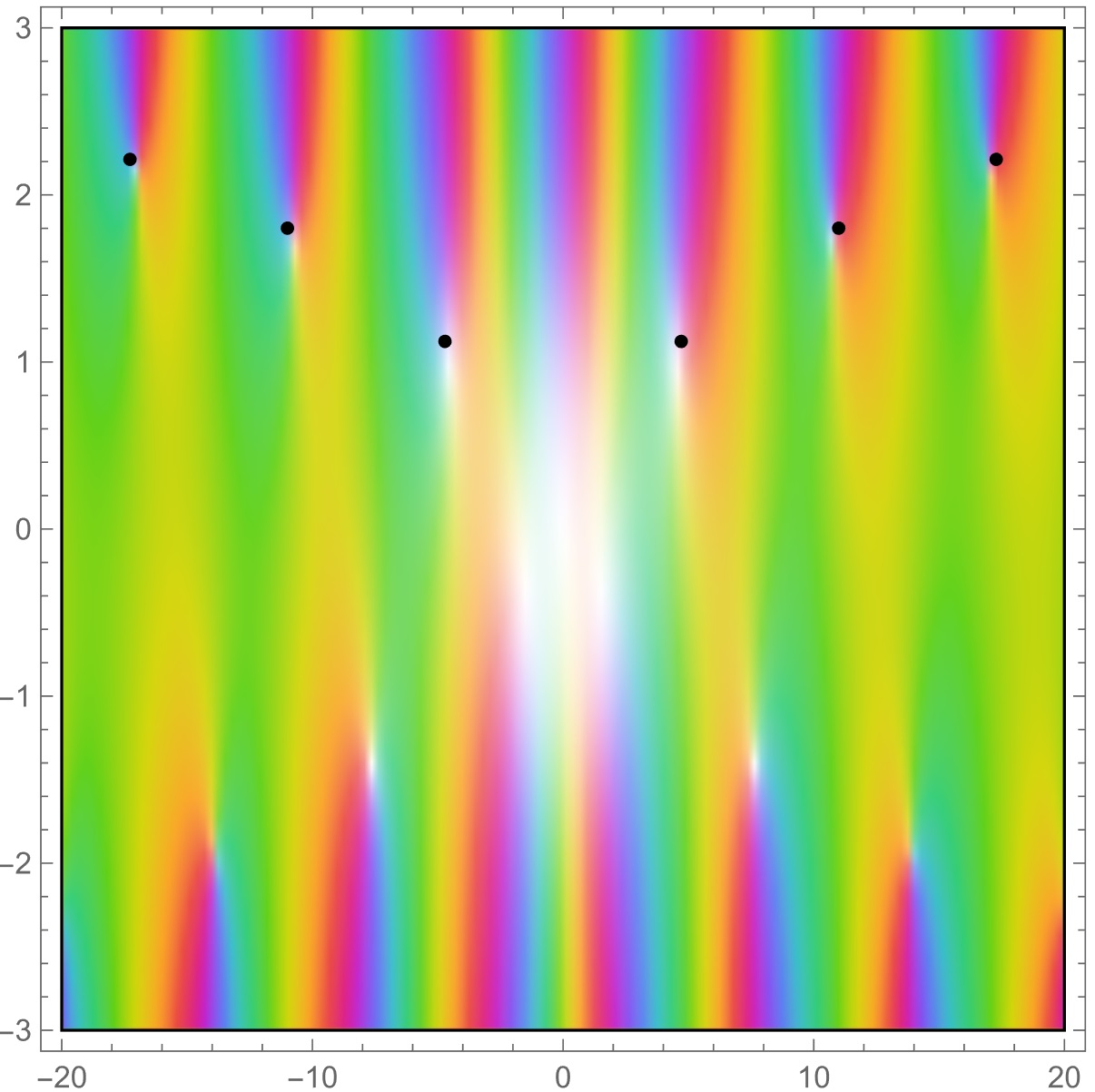}
    \caption{A complex plot of $\widetilde{F_1}$ along with locations of the poles in the upper half plane.}
    \end{subfigure}
    \caption{In both complex plots over the same domain in $\C$, we set $B = 0.4$ and $\omega = 10$. As $\omega \to \infty$, the poles of $F_1$ cluster and tend towards the real axis, whilst the poles in $\widetilde{F_1}$ tend towards every other odd integer multiple of $\frac{\pi}{2}$. Analogous behavior holds for the other $F_i$.}
    \label{F1plots}
\end{figure}

For each term in \eqref{partialfrac}, we will continuously dilate the domain by a factor of $\frac{2}{\omega}$. Let $\widetilde{F_i}(z) = \frac{2}{\omega} F_i\left( \frac{2 z}{\omega} \right)$. Observe that $z_0$ is a pole of $\widetilde{F_i}$ if and only if $\frac{2}{\omega}z_0$ is a pole of $F_i$ and
\begin{align}
\label{resdilated}
\text{Res} \left( \widetilde{F_i}(z), z_0 \right) &= \text{Res} \left( F_i(z), \frac{2}{\omega} z_0 \right).
\end{align}

The rescalings give us
\begin{align}
\widetilde{F_1}(z) &= -\frac{i(2B - 1)}{8B^2z\left( \cos(z) - \frac{iz}{\omega B} \right)}\\
\widetilde{F_2}(z) &= \frac{i(2B - 1)}{8B^2z\left( \cos(z) + \frac{iz}{\omega B} \right)} \\
\widetilde{F_3}(z) &= \frac{1}{8\omega B^3\left( \cos(z) - \frac{iz}{\omega B} \right)^2} \\
\widetilde{F_4}(z) &= -\frac{1}{8\omega B^3\left( \cos(z) + \frac{iz}{\omega B} \right)^2}.
\end{align}

Notice that the pairs $\widetilde{F_1}$ with $\widetilde{F_3}$, and $\widetilde{F_2}$ with $\widetilde{F_4}$ share the same nonzero poles. We claim that each of these functions have precisely one pole in the upper half plane in every other strip $(n\pi,(n+1)\pi) \times (0,\infty)$, provided $\omega$ and $B$ satisfy a weak requirement which will be fulfilled when $\omega \gg B$.

\begin{lemma}
There exists a constant $C < 1$ such that if $\omega B > C$, then $\widetilde{F_1}$ and $\widetilde{F_3}$ have exactly one common nonzero pole in the upper half-plane in each strip bounded by the vertical lines $x = n \pi$ and $x = (n+ \emph{sgn}(n)) \pi$ where $n$ is an odd integer. Additionally, $\widetilde{F_2}$ and $\widetilde{F_4}$ have exactly one common pole in the upper half-plane in each strip bounded by the vertical lines $x = n\pi$ and $x = (n+ \emph{sgn}(n)) \pi$ for each $n$ a nonzero even integer, along with one common pole in each of the strips bounded by $x = 0$ and $x = \pm \pi$.
\label{uniquepoles}
\end{lemma}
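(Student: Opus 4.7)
The plan is to identify the common nonzero poles of $\widetilde{F_1}$ and $\widetilde{F_3}$ with the zeros of the entire function $g_-(z) = \cos z - iz/(\omega B)$, and the common poles of $\widetilde{F_2}$ and $\widetilde{F_4}$ with those of $g_+(z) = \cos z + iz/(\omega B)$. This reduction is transparent from \eqref{partialfrac}: the denominator factors of $\widetilde{F_1}, \widetilde{F_3}$ (resp.\ $\widetilde{F_2}, \widetilde{F_4}$) are exactly $\cos z \mp iz/(\omega B)$, up to the harmless pole at $z = 0$ already noted in the paper. The task thus reduces to counting zeros of $g_\pm$ in each of the strips named in the lemma.

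I would first locate these zeros perturbatively. Applying the implicit function theorem to $\cos z \mp i\epsilon z = 0$ at each simple zero $z_m = (m+\tfrac{1}{2})\pi$ of $\cos$, with $\epsilon = 1/(\omega B)$, gives the first-order expansions
$$z_m(\epsilon) = z_m + (-1)^{m+1}i\epsilon z_m + O(\epsilon^2) \text{ for } g_-, \qquad z_m(\epsilon) = z_m + (-1)^{m} i\epsilon z_m + O(\epsilon^2) \text{ for } g_+.$$
The perturbed zero lies in the upper half-plane precisely when its imaginary displacement is positive: for $g_-$ this selects $m$ odd positive or $m$ even negative, and for $g_+$ it selects $m$ even positive or $m$ odd negative. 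In each case $z_m$ lies in the strip the lemma names, including the two special strips $(0,\pi)$ and $(-\pi,0)$ for $g_+$ corresponding to $m = 0$ and $m = -1$. A complementary sign analysis of $\mathrm{Im}\,g_\pm(x+iy) = \mp\bigl(\sin x\sinh y \pm x/(\omega B)\bigr)$ shows this expression is sign-definite in every upper-half-plane strip \emph{not} named in the lemma, so $g_\pm$ admits no zeros in those strips.

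To prove that each named strip contains \emph{exactly one} zero, I would apply the argument principle on the rectangular contour $\partial R_{n,Y}$, where $R_{n,Y} = (n\pi, (n+\mathrm{sgn}(n))\pi)\times(0,Y)$ and $Y$ is chosen large. The function $g_\pm$ avoids zero on the boundary: on the real axis $\mathrm{Im}\,g_\pm(x) = \mp x/(\omega B) \neq 0$ for $x\neq 0$; on vertical edges $x = k\pi$ with $k\neq 0$, $\mathrm{Im}\,g_\pm(k\pi+iy) = \mp k\pi/(\omega B) \neq 0$; and on the top edge $|\cos z|\sim \sinh Y$ swamps the linear perturbation $|iz/(\omega B)|$. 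The main obstacle is the imaginary axis $x = 0$ bounding the two special strips for $g_+$, on which $g_+(iy) = \cosh y - y/(\omega B)$ vanishes iff $\omega B = y/\cosh y$ for some $y > 0$. Since $C_0 = \max_{y > 0}(y/\cosh y) < 1$, attained at the unique root of $\coth y = y$, any $C \in (C_0, 1)$ ensures the hypothesis $\omega B > C$ excludes such imaginary-axis zeros and validates the contour. An edge-by-edge computation of $\Delta\arg g_\pm$ around $\partial R_{n,Y}$ using the signs of its real and imaginary parts totals exactly $2\pi$, giving one zero per strip.
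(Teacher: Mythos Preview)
Your approach is correct and genuinely different from the paper's. Both arguments begin with the same reduction to the zeros of $g_\pm(z)=\cos z\mp iz/(\omega B)$, but the paper then treats the zero sets of $\operatorname{Re}g_-$ and $\operatorname{Im}g_-$ as two explicit plane curves $\Gamma_C,\Gamma_S$, proves existence of an intersection in each named strip by an intermediate-value argument, and establishes uniqueness by a hands-on comparison of $\frac{d}{dx}\Gamma_{C,y}$ against $\frac{d}{dx}\Gamma_{S,y}$ on the narrow subinterval $I_n$ where both could be increasing; this is where the numerical constants and the final bound $\omega B>0.7$ emerge. Your route is topological: perturb from the simple zeros of $\cos z$ to find one candidate per strip, rule out the unlisted strips by sign-definiteness of $\operatorname{Im}g_\pm$, and count via the argument principle. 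Your identification of the threshold $C$ with $\max_{y>0}y/\cosh y\approx 0.663$ is exactly the condition the paper imposes so that the $\arccos$ branch in its equation for $\Gamma_{C,x}$ is well-defined; your method avoids the paper's second and more laborious derivative estimate entirely. (A small slip: the correct expression is $\operatorname{Im}g_\pm(x+iy)=-\sin x\sinh y\pm x/(\omega B)$, not what you wrote, though your sign-definiteness conclusions are unaffected.)

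The one step you assert without proof is that $\Delta\arg g_\pm$ around $\partial R_{n,Y}$ totals exactly $2\pi$, uniformly in $n$ and in all $\omega B>C$. Tracking the argument edge by edge is feasible but delicate, since on the top edge both $\operatorname{Re}g_\pm$ and $\operatorname{Im}g_\pm$ change sign and the bookkeeping depends on $n$. A cleaner way to close this, already implicit in your setup, is a continuity argument in $\epsilon=1/(\omega B)$: you have shown $g_\pm$ is zero-free on the three finite edges of each half-strip for every $\epsilon>0$ (and on the imaginary-axis edge for $g_+$ once $\omega B>C$), and $|g_\pm(x+iy)|\to\infty$ as $y\to\infty$ uniformly for $\epsilon$ in any compact interval. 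Hence the zero count in each half-strip is a continuous, integer-valued function of $\epsilon$ on $(0,1/C)$, so it suffices to evaluate it at one small $\epsilon$. There Hurwitz's theorem on a large bounded rectangle, combined with your first-order perturbation, gives exactly one zero in each listed strip. This bypasses the explicit winding computation and makes the uniformity in $n$ and in $\omega B>C$ automatic.
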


\begin{figure}[h]
    \centering
    \includegraphics[width=3in]{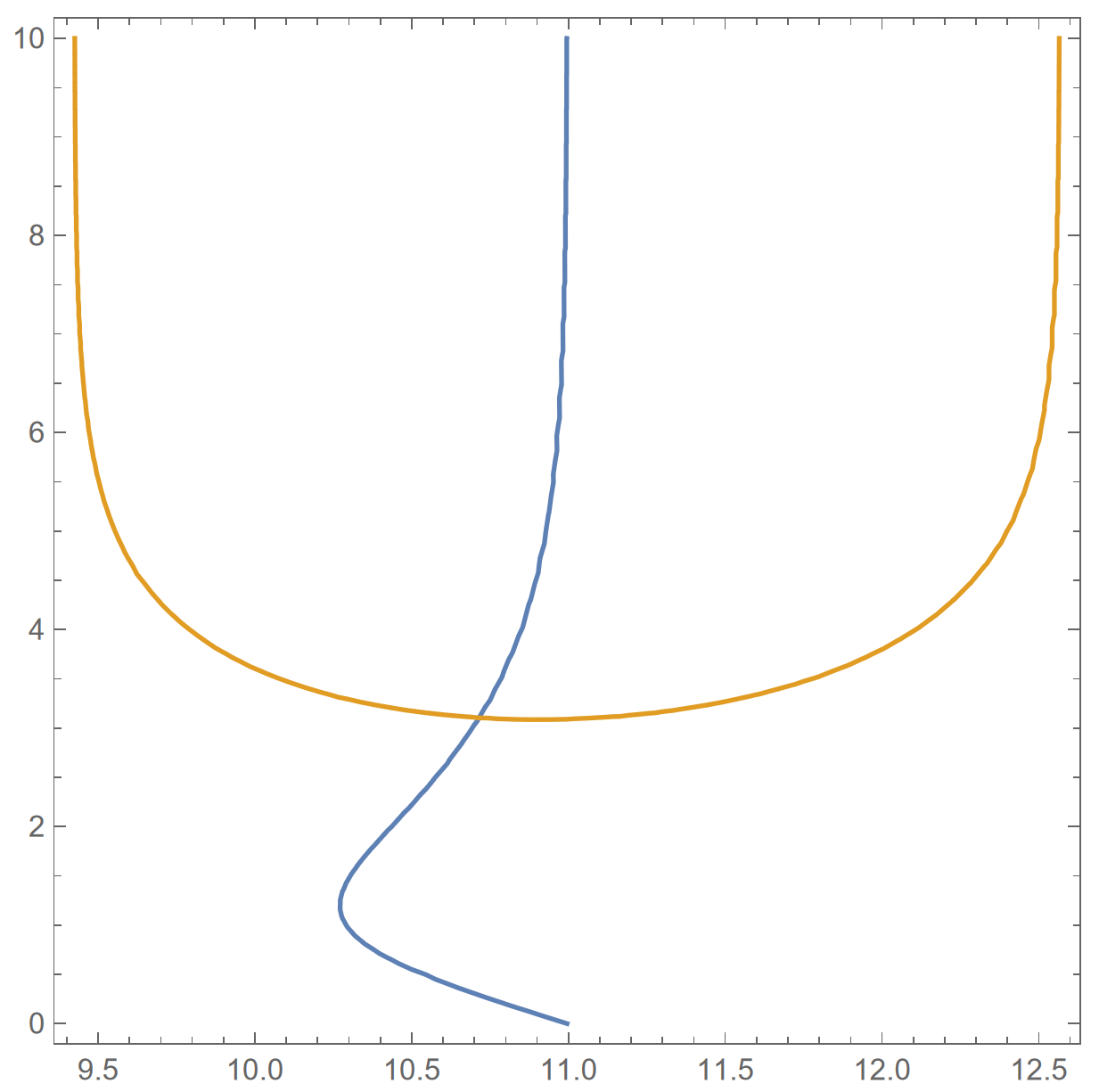}
    \caption{Branches of the curves defined by \eqref{F1y} and \eqref{F1x} (with $\omega = 2, B = \frac{1}{2}$), in blue and orange respectively, in the strip $(3\pi,4\pi) \times (0,\infty)$. The intersection corresponds to a pole of $\widetilde{F_1}$ in the upper half-plane, and there is exactly one pole in every other strip.}
    \label{F1xF1ycurves}
\end{figure}
\begin{proof}
We will only prove this lemma for $\widetilde{F_1}$, as the remaining cases follow analogous reasoning. For $\widetilde{F_1}$, it suffices to identify the zeros of the factor $\cos(z) - \frac{iz}{\omega B}$ in the denominator. Write $z = x + iy$, for $x,y \in \R$. Rewriting the equation $\cos(z) = \frac{iz}{\omega B}$ and equating real and imaginary components gives the system
\begin{align}
    \cos(x)\cosh(y) &= -\frac{y}{\omega B}\label{F1y}\\ 
    \sin(x)\sinh(y) &= -\frac{x}{\omega B}. \label{F1x}
\end{align}
Let $\Gamma_C$ and $\Gamma_S$ denote the solution curves to \eqref{F1y} and \eqref{F1x} respectively (the subscripts refer cosine and sine), restricted to the upper half-plane with $y \geq 0$. We can rewrite \eqref{F1x} as 
\begin{align}
    \Gamma_{S,y}(x) &= \text{arcsinh}\left( \frac{-x}{\omega B \sin(x)} \right),
    \label{xbranch}
\end{align}
expressing $y$ as an even function of $x$. There are vertical asymptotes along each line $x = 2n\pi, n \in \Z - \{0\}$, but between these asymptotes, $y(x)$ is continuous. Furthermore, as $\text{arcsinh}(u) = 0$ if and only if $u = 0$, we can see that between these asymptotes, $y$ does not change signs because $-\frac{x}{\omega B \sin(x)}$ is never zero for nonzero $x$. As $\text{sgn}(\text{arcsinh}(u)) = \text{sgn}(u)$, we have that $\Gamma_S$ is in the upper half plane if and only if $x \in (\pi, 2\pi)\cup (3\pi, 4\pi)\cup \dots$ when $x > 0$, or $x \in (-2\pi,-\pi)\cup (-4\pi, -3\pi)\cup \dots$ when $x < 0$.

Rewriting \eqref{F1y}, we can express $x$ as a function of $y$ with
\begin{align}
\Gamma_{C,x}(y) &= \arccos\left( \frac{-y}{\omega B \cosh(y)} \right).
\label{ybranch}
\end{align}
Since $\omega B$ is not small (taking $\omega B > 0.7$ will work) we have that $\frac{-y}{\omega B \cosh(y)} \in [-1,1]$ for all $y$. Therefore \eqref{ybranch} is well defined. However, the standard definition of $\arccos(y)$ with range $[0,\pi]$ is merely the principal branch of a multivalued function, and to obtain the other branches of $\Gamma_C$, we repeatedly reflect the graph along the lines $x = n\pi$, $n \in \Z$. As $\lim\limits_{y \to \infty} \frac{-y}{\omega B \cosh{y}} = 0$, we can see that $\Gamma_C$ has an asymptote at $x = \arccos(0) =  \frac{\pi}{2}$, and after reflecting the curve over the lines $x = n\pi$, we get additional asymptotes at the lines $x = \frac{n\pi}{2}$. In the principal branch, $\frac{\pi}{2} < x < \pi$ for all $y>0$, so $x$ asymptotically converges to $\frac{\pi}{2}$ from above as $y \to \infty$. Therefore, the branch of $\Gamma_C$ obtained by reflecting the principal branch across $x = \pi$ has the property that  $\pi < x < \frac{3\pi}{2}$ for all $y > 0$, with $x$ asymptotically approaching $\frac{3\pi}{2}$ from below as $y \to \infty$. For $x > 0$, we saw that $\Gamma_S$ resides in the upper half plane  $x \in (\pi, 2\pi)\cup (3\pi, 4\pi)\cup \dots$, and therefore the branches of of $\Gamma_C$ approach their $x$-asymptotes from below. When $x < 0$, the asymptotes are approached from above.

Now fix one of the strip subdomains of the upper half-plane $\Sigma_n = (n\pi, (n+1)\pi) \times (0, \infty)$, with $n > 0$ odd. (The case for $x < 0$ is analogous.) The corresponding branch of $\Gamma_S$, expressing $y$ as a function of $x$, is well-defined on the entire open interval and contained entirely in this $\Sigma_n$. As the branch of $\Gamma_C$ has an $x$-asymptote at $n\pi + \frac{\pi}{2}$ while also containing the point $\left( n\pi + \frac{\pi}{2}, 0 \right)$, the two branches must intersect, indicating the existence of a pole of $\widetilde{F_1}$ in this strip, viewed as a subset of $\C$. We now claim this intersection is unique, as depicted in Figure \ref{F1xF1ycurves}.

This branch of $\Gamma_C$, expressing $x$ as a function of $y$, is defined by
\begin{align}
\Gamma_{C,x}(y) &= (n+1)\pi - \arccos\left(\frac{-y}{\omega B \cosh(y)} \right),
\label{ybranch}
\end{align}
and from this point on we will only use the notation $\Gamma_{C,x}$ to refer to the function of $y$ defining this branch. Differentiating with respect to $y$, we get
\begin{align}
    \frac{d}{dy} \Gamma_{C,x}(y) &= \frac{ (y \tanh(y) - 1)\text{sech}(y)}{\omega B \sqrt{1 - \frac{y^2 \text{sech}^2(y)}{\omega^2 B^2}}}. \label{dxdy}
\end{align}
Let $y_0$ be the unique positive solution to $y\tanh(y) - 1 = 0$, which is approximately $y_0 \approx 1.199 < 1.2$. For $y > y_0$, observe that \eqref{dxdy} is strictly positive and well-defined. By the Inverse Function Theorem, the subset of the branch of $\Gamma_C$ where $y > 1.2$ can also be expressed the graph of a monotonic function of $x$ $\Gamma_{C,y}$ over some subdomain of $\left(n\pi,\frac{(n+1)\pi}{2}\right)$, given by the inverse function of \eqref{ybranch}. We are able to restrict the righthand limit of this subdomain because we saw $\Gamma_C$ does not intersect $\left(\frac{(n+1)\pi}{2}, (n+1)\pi\right) \times (0,\infty)$. By inverting \eqref{dxdy}, we can see that this inverse function expressing $y$ as a function of $x$ has strictly positive derivative with respect to $x$, as depicted by the blue curve on \ref{F1xF1ycurves}. 

To summarize, we have deduced that any intersection of the two branches of \eqref{F1y} and \eqref{F1x} must reside in the substrip $\Sigma'_n = \left(n\pi, n\pi + \frac{\pi}{2}\right) \times \big[ 1.2, \infty)$. Within $\Sigma'_n$, both branches are graphs of functions of $x$. We have already deduced that an intersection exists, but to prove the intersection is unique, we will establish that within the $\Sigma'_n$, the derivative of the function defining $\Gamma_C$ with respect to $x$ (when it is defined) is strictly greater than the derivative of $\Gamma_{S,y}$.

Differentiating \eqref{xbranch} gives
\begin{align}
    \frac{d}{dx} \Gamma_{S,y}(x) &= \frac{\csc^2(x)(x\cos(x) - \sin(x))}{\omega B \sqrt{\frac{x^2\csc^2(x)}{\omega^2B^2} + 1}}.
    \label{dydx}
\end{align}
Recall that $\sin(x) < 0$ for all $x \in (n\pi, (n+1)\pi)$, and $\cos(x) < 0$ within the restricted $x$-domain of $\left( n\pi, n\pi + \frac{\pi}{2} \right)$. Provided $x\cos(x) - \sin(x) < 0$, we will have that \eqref{dydx} is negative and hence less than the strictly positive derivative of $\Gamma_C$ with respect to $x$. Unfortunately, there is a solution to $x\cos(x) - \sin(x)$ between $n\pi$ and $n\pi + \frac{\pi}{2}$, so there is a thin substrip of $\Sigma'_n$ where both derivatives are positive. However, by estimating \eqref{dydx} on the portion where it could be positive, we will see that it is smaller than the $x$-derivative of $\Gamma_C$.

Let $x_n$ denote the solution to $x\cos(x) -\sin(x) = 0$ such that $x \in (n\pi, (n+1)\pi)$. Equivalently, $x_n$ is the solution of $\arctan(x) = x$. Using a first order Taylor expansion of $\arctan(x) - x$ as shown in \cite{frankel37}, we can get
\begin{align}
    x_n &> \frac{(2n+1)\pi}{2} - \frac{2}{(2n+1)\pi} \nonumber \\
    &> n\pi + \frac{\pi}{2} - \frac{1}{n\pi} \label{xtanx}.
\end{align}
Let $I_n$ denote the interval $\left(n\pi + \frac{\pi}{2} - \frac{n}{\pi}, n\pi + \frac{\pi}{2} \right)$. These are intervals of $x$-values where \eqref{dydx} could be positive within the $\Sigma'_n$. The widths tend to zero as $n \to \infty$, with the largest width being $\frac{1}{\pi}$ when $n = 1$. The numerator of \eqref{dydx} is bounded above by $1$ for $x \in I_n$. It is an elementary calculus exercise to show that
\begin{align*}
    \inf\limits_{x \in I_n} x^2\csc^2(x) &> \inf\limits_{x \in I_1} x^2 \csc^2(x) \\
    &\approx 21.19 > 21.
\end{align*}
Therefore, we have 
\begin{align}
    \sup\limits_{x \in I_n} \frac{d}{dx} \Gamma_{S,y}(x) &< \frac{1}{\sqrt{\omega^2 B^2 + 21}}. \label{supbranchx}
\end{align}
To get a lower bound on the $x$-derivative of $\Gamma_{C,y}(x)$ over $I_n$, it suffices to compute
\begin{align*}
    \inf\limits_{y > 1.2} \frac{d}{dy} \left( \Gamma_{C,x}(y) \right)^{-1} &= \inf\limits_{y > 1.2} \frac{\omega B \sqrt{1 - \frac{y^2 \text{sech}^2(y)}{\omega^2 B^2}}}{ (y \tanh(y) - 1)\text{sech}(y)}.
\end{align*}
First, it is easy to calculate that $y^2\text{sech}^2(y) < 0.5$ for all $y$. Therefore,
\begin{align*}
    \inf\limits_{y > 1.2} \omega B \sqrt{1 - \frac{y^2 \text{sech}^2(y)}{\omega^2 B^2}} &> \sqrt{\omega^2B^2 - 0.25}.
\end{align*}
Next, we can calculate a global maximum to get
\begin{align*}
    \sup\limits_{y > 1.2} (y \tanh(y) - 1) \text{sech}(y) &\approx 0.2511 \\
    &< 0.26.
\end{align*}
We now conclude
\begin{align}
    \inf\limits_{x \in I_n} \frac{d}{dx} \Gamma_{C,y}(x) &> \frac{50}{13} \sqrt{\omega^2 B^2 - 0.26}.
    \label{infbranchy}
\end{align}
We can conclude \eqref{infbranchy} exceeds \eqref{supbranchx} provided $\omega B > 0.7$. The constants obtained in the other cases are of comparably small size.
\end{proof}

Consider the discrete sets
\begin{align}
    \Lambda^-(\omega) &= \left\{ 2n\pi - \frac{\text{sgn}(n) \pi}{2} + i\text{arcsinh}\left(\frac{2 \left| n \right|\pi}{\omega B} - \frac{\pi}{2\omega B} \right) \colon n \in \Z - \{0\} \right\} \\
    \Lambda^+(\omega) &= \left\{ 2n\pi + \frac{\text{sgn}(n) \pi}{2} + i\text{arcsinh}\left(\frac{2 \left| n \right| \pi }{\omega B} + \frac{\pi}{2\omega B} \right) \colon n \in \Z - \{0\}\right\} \cup \left\{ \pm \frac{\pi}{2} \right\}
\end{align}
Denote the elements of $\Lambda^{\pm}$ by $\lambda^{\pm}_n$, for $n \in \Z - \{0\}$. Through an abuse of notation, we can let $\lambda^-_{\pm 0}$ denote $\pm \frac{\pi}{2}$, for positive and negative zero. These are approximate locations for the upper-half plane poles $\widetilde{F_-}$ and $\widetilde{F_+}$ respectively. These are not the exact poles, but we will now show that these points are good approximations for the poles in that they are asymptotically equivalent to the exact poles as $\omega \to \infty$.

\begin{lemma} Let $z^{\pm}_n$ denote the nonzero upper half-plane poles of $\widetilde{F_i}$ as described in Lemma \ref{uniquepoles}. Then for all $n$,
\begin{align*}
    \lim\limits_{\omega \to \infty} \frac{\lambda^{\pm}_n}{z^{\pm}_n} &= 1.
\end{align*}
\end{lemma}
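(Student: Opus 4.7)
The plan is to show that for each fixed index $n$, both the approximation $\lambda^{\pm}_n$ and the true pole $z^{\pm}_n$ converge to the same nonzero real limit $L_n$, namely the unique zero of $\cos(z)$ lying in the strip from Lemma \ref{uniquepoles} that contains the pole. Explicitly, $L_n = 2n\pi - \text{sgn}(n)\pi/2$ for $\widetilde{F_-}$ (with $n \neq 0$), $L_n = 2n\pi + \text{sgn}(n)\pi/2$ for $\widetilde{F_+}$ (with $n \neq 0$), and $L_{\pm 0} = \pm \pi/2$ for the two $\widetilde{F_+}$ poles in the central strips. Each $L_n$ is a nonzero odd multiple of $\pi/2$, so once convergence of both sequences to $L_n$ is established, the ratio $\lambda^{\pm}_n / z^{\pm}_n$ automatically tends to $L_n / L_n = 1$ in $\C$.

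The convergence $\lambda^{\pm}_n \to L_n$ is a direct inspection of the definition: the real part is exactly $L_n$, and the imaginary part is either zero (in the $n = \pm 0$ case) or of the form $\text{arcsinh}(c_n/(\omega B))$ for a constant $c_n$ depending only on $n$, which vanishes as $\omega \to \infty$.

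The main step, and the real obstacle, is showing $z^{\pm}_n \to L_n$, since a priori the imaginary part of $z^{\pm}_n$ could drift off to infinity. Writing $z^{\pm}_n = x + iy$, I would square and add the real and imaginary parts of the pole equation $\cos(z) \pm iz/(\omega B) = 0$, giving \eqref{F1y} and \eqref{F1x} in the $-$ case (with analogous equations in the $+$ case), and obtain the identity
\[
\cos^2(x) + \sinh^2(y) = \frac{x^2 + y^2}{\omega^2 B^2}.
\]
By Lemma \ref{uniquepoles}, $x$ is trapped in a bounded strip, so $x^2$ is uniformly bounded. Using $\sinh^2(y) \geq y^2$ for $y \geq 0$ and dropping the $\cos^2$ term yields $y^2(\omega^2 B^2 - 1) \leq x^2$, so $y$ shrinks like $O(1/\omega)$. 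The identity then forces $\cos^2(x) \leq (x^2 + y^2)/(\omega^2 B^2) \to 0$, and since the containing strip holds exactly one zero of $\cos$, continuity sends $x \to L_n$. Thus $z^{\pm}_n \to L_n$, and combined with the previous paragraph, the ratio limit is $1$.
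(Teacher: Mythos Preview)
Your proof is correct and takes a genuinely different route from the paper's. The paper approaches the problem by Taylor-expanding $\cos(z)-\frac{iz}{\omega B}$ at the true pole $z_n$, then substituting $z=\lambda_n$ and arguing that in the limit $\omega\to\infty$ the series collapses to $\sin(\lambda_n - z_n)$, forcing $\lambda_n - z_n \to 0$. This uses the relations $\cos(z_n)=\frac{iz_n}{\omega B}$ and $\sin(z_n)=\sqrt{1+\frac{z_n^2}{\omega^2 B^2}}$ to identify the limiting Taylor coefficients. By contrast, you bypass the Taylor machinery entirely: you take the modulus-squared identity $\cos^2(x)+\sinh^2(y)=\frac{x^2+y^2}{\omega^2 B^2}$, combine it with the horizontal strip bound from Lemma~\ref{uniquepoles} and the elementary inequality $\sinh^2(y)\ge y^2$ to squeeze $y\to 0$, and then read off $\cos(x)\to 0$, hence $x\to L_n$. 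Your argument has the advantage of being fully self-contained; in particular it cleanly establishes the boundedness of $\mathrm{Im}\,z_n$ (indeed $y=O(1/\omega)$), a point the paper's argument needs for $\sin(z_n)\to 1$ but does not justify explicitly. The paper's approach, once made rigorous, nominally yields the slightly finer statement $\lambda_n - z_n \to 0$, but since the common limit $L_n$ is nonzero this is equivalent to your ratio conclusion.
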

\begin{proof}

Fix $n > 0$, and we will work with $\Lambda^-$, dropping the superscripts. The other cases follow analogous reasoning, which we will omit. In this case, $\lambda_n = \frac{(4n-1)\pi}{2} + i \text{arcsinh}\left( \frac{(4n-1)\pi}{2\omega B} \right)$. We also have that $z_n$ is a zero of the function $\cos(z) - \frac{iz}{\omega B}$. First, we claim that
\begin{align}
    \lim\limits_{\omega \to \infty} \cos(\lambda_n) - \frac{i\lambda_n}{\omega B} &= 0.
    \label{coslambda}
\end{align}
The second summand of \eqref{coslambda} clearly tends to zero. Now see that
\begin{align*}
    \cos(\lambda_n) &= \cos\left( \frac{(4n+1)\pi}{2} \right) \cosh\left( \frac{(4n+1)\pi}{2 \omega B} \right) - i \sin\left(\frac{(4n+1)\pi}{2} \right)\sinh\left( \frac{(4n+1)\pi}{2\omega B} \right) \nonumber \\
    &=i \sinh\left( \frac{(4n+1)\pi}{2 \omega B} \right) \to 0 
\end{align*}
as $\omega \to \infty$.

Next, consider the Taylor series of $\cos(z) - \frac{iz}{\omega B}$ centered at $z_n$. This gives us 
\begin{align}
    \cos(z) - \frac{iz}{\omega B} &= \left(\cos(z_n) - \frac{iz_n}{\omega B}\right) - \left(\sin(z_n) + \frac{iz_n}{\omega B}\right) (z - z_n) \nonumber \\
    &\-\ + \sum\limits_{k \geq 1}^\infty \left[ \frac{(-1)^{k}\cos(z_n)}{(2k)!}(z-z_n)^{2k} + \frac{(-1)^{k+1} \sin(z_n)}{(2k+1)!}(z-z_n)^{2k+1} \right]. \label{F1expansion}
\end{align}
As $\cos(z_n) = \frac{iz_n}{\omega B}$, we have that $\sin(z_n) = \sqrt{1 + \frac{z_n^2}{\omega^2B^2}}$. So as $\omega \to \infty$, $\cos(z_n) \to 0$ and $\sin(z_n) \to 1$. Substituting $z = \lambda_n$ and taking $\omega \to \infty$ in \eqref{F1expansion}, while applying \eqref{coslambda} to the lefthand side gives
\begin{align}
    0 &= (\lambda_n - z_n) \left[-1 + \frac{1}{3!}(z-z_n)^2 - \frac{1}{5!}(z-z_n)^4 + \dots \right] \nonumber \\
    &= (z_n- \lambda_n) \frac{\sin(\lambda_n - z_n)}{\lambda_n - z_n} \\
    &= z_n - \lambda_n.
\end{align}
We can now see that $\lambda_n$ is asymptotically equivalent to $z_n$.
\end{proof}
The asymptotic equivalence of the poles is what allows us to evaluate the limit of the residues as the residues of the limit. Notice that $\widetilde{F_-}$ converges uniformly to $\widetilde{F^\infty_-}(z) = -\frac{i (2B-1)}{8B^2z\cos(z)}$ on compact subsets of $\C - \Lambda^-(\infty)$, whilst $\widetilde{F_+}$ converges uniformly to $\widetilde{F^\infty_+}(z) = \frac{i (2B-1)}{8B^2z\cos(z)}$ on compact subsets of $\C - \Lambda^+(\infty)$. It is an elementary computation to confirm that
\begin{align}
    \text{Res}\left( \widetilde{F^\infty_-}(z), \frac{(4n-1)\pi}{2} \right) &= \frac{-i(2B-1)}{4B^2(4n-1)\pi}, n \in \Z - \{0\} \\
    \text{Res}\left( \widetilde{F^\infty_+}(z), \frac{(4n-1)\pi}{2} \right) &= \frac{-i(2B-1)}{4B^2(4n+1)\pi}, n \in \Z - \{0\} \\
    \text{Res}\left( \widetilde{F^\infty_+}(z), \pm \frac{\pi}{2} \right) &= \frac{-i(2B-1)}{4B^2\pi}.
\end{align}
Multiplying these residues by $2\pi i$ and summing them yields
\begin{align}
    \lim\limits_{\omega \to \infty} M(\omega, B) &= \frac{(2B-1)}{B^2} \sum\limits_{k=0}^\infty \frac{1}{2k+1}, \label{residuesum}
\end{align}
which will prove our result.

To conclude the proof, it suffices to describe the contours. For $R > 0$, let $\eta_R$ be the positively oriented square with vertices $-R, R, R + iR, -R + iR$. Label the four sides $\eta_1, \eta_2, \eta_3$, and $\eta_4$ counter clockwise, with $\eta_1$ being the side length from $-R$ to $R$. Notice that $\int_{\eta_2} F(z)dz + \int_{\eta_4} F(z)dz = 0$. Finally, we can see that
\begin{align*}
    \left| \int_{\eta_3} F(z)dz \right| &\leq 2R \left| \sup\limits_{z \in \eta_3} F(z) \right| \\
    &\leq \frac{2CR}{R^2} \to 0,
\end{align*}
as $R \to \infty$, establishing \eqref{residuesum} and completing the proof.
\end{proof}

\section{M\"obius-Plateau Stationary Symmetric Screws}

We now discuss the implications of Theorem \ref{MwB} to the more general M\"obius-Plateau energy. The classical Plateau problem seeks to find a surface minimizing surface area whose boundary is a given fixed $C^2$ curve $\gamma$. This minimal surface area is defined as the Plateau energy $E_P(\gamma)$. The existence of minimal surfaces with given boundary was proven independently by Douglas and Rado, and an exposition of their proof can be found in \cite{colding2011course}. Plateau problems with free boundary subject to energy constraints have been of recent theoretical and applied interest \cite{matsutani2010euler,giusteri2017solution}. In particular, the Euler-Plateau problem seeks to minimize the sum of a curve's Plateau energy and its elastic energy, given by the integral of the squared curvature of $\gamma$. Minimizing this energy has applications to the study of cellular membranes. Bernatzki and Ye proved the existence of minimizers of the Euler-Plateau energy, but the gradient descent of curve could intersect unless one assumes the initial curve has sufficiently low energy to preclude intersections \cite{bernatzki2001minimal}.

As the M\"obius energy is self-repulsive and possesses strong regularity properties \cite{freedman1994mobius,blatt20}, this leads us to believe that the M\"obius-Plateau energy, defined as the sum of the M\"obius and Plateau energies, could provide an alternative to the strong assumptions on the initial conditions. 

The gradient of the Plateau energy of $\gamma$ is given by $\textbf{n} = \textbf{T} \times \boldsymbol{\nu}$, where $\textbf{T}$ is the Frenet tangent vector and $\boldsymbol{\nu}$ is a choice of unit normal for the minimal surface bounded by $\gamma$. When there are two boundary components, we orient the curves oppositely so the Plateau gradient vector field pulls the surface apart, increasing the minimal surface area. Given fixed constants $\alpha, \beta \geq 0$, the general M\"obius-Plateau energy of the pair of curves $\gamma_1,\gamma_2$ is defined by
\begin{align}
    E(\gamma_1,\gamma_2) &= \alpha E_M(\gamma_1,\gamma_2) + \beta E_P(\gamma_1, \gamma_2).
\end{align}
So the variational equation of the M\"obius-Plateau energy is given by
\begin{align}
    \alpha G_{\gamma_1,\gamma_2} &= -\beta \textbf{n}.
    \label{mobiusplateauvar}
\end{align}

In joint work with Nair \cite{ln22}, we considered the M\"obius-Plateau energy of helicoidal strips, bounded by two helices on the same helicoid. Helicoidal strips are classified as either screws or ribbons, depending on whether or not they contain the axis. This distinction alters the variational equations and the criteria for the strips to be stationary. We proved a strong characterization for stationary ribbons: the coiling must be high, the width must be thin in comparison to the coiling, and the ribbon must remain close to the axis. 

The symmetric helix pairs we have considered are the boundary of a helicoidal screw. A general screw is determined by the radii $A$ and $B$, with frequency $\omega$, where we now assume $A < 0 < B$ so the axis is included. For a screw to be stationary, the repulsive forces on the helices seeking to decrease the M\"obius energy is must be cancelled by the attractive force seeking to contract the helices and decrease the surface area. 

\begin{figure}[h]
    \centering
    \includegraphics[width=6in]{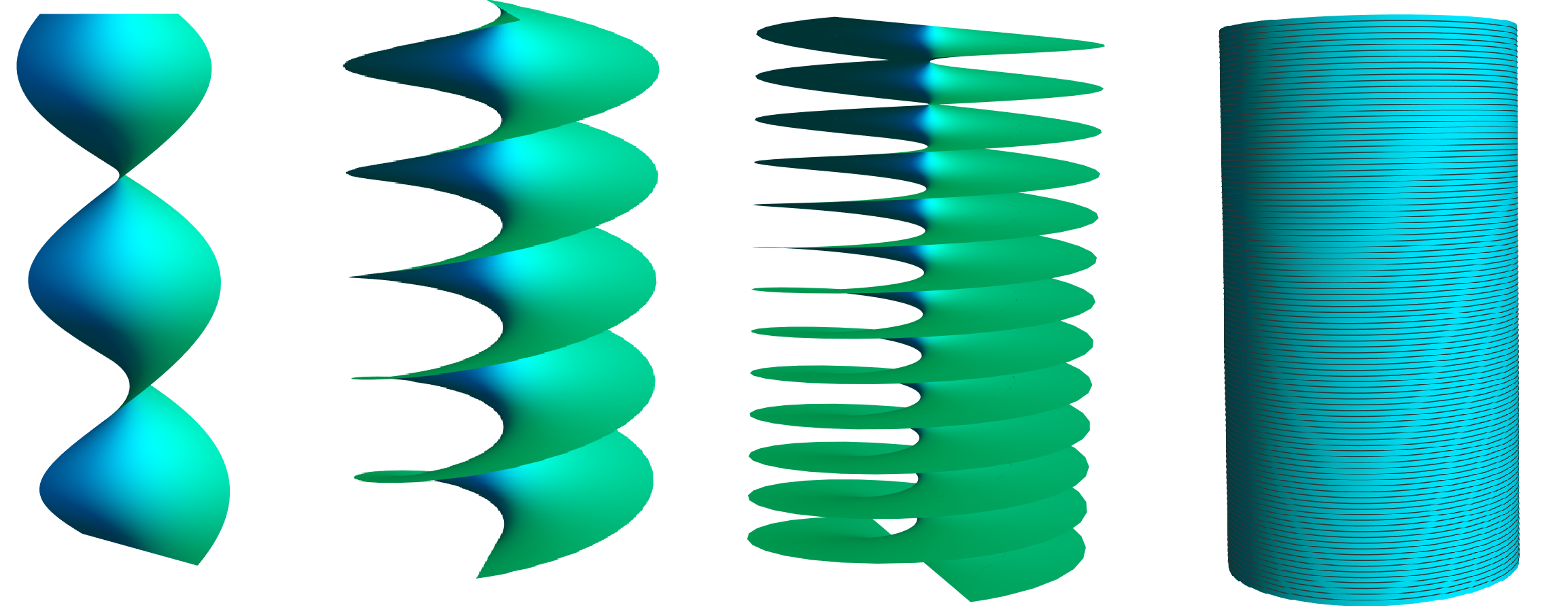}
    \caption{As $\omega \to \infty$, stationary symmetric screws approach a radius of $\frac{1}{2}$ from below, tending towards a limiting helicoid with near-flat curvature away from the axis, and infinite Gaussian curvature at the axis, all whilst maintaining zero mean curvature due to helicoids being minimal. These are not the actual calculated screws. This image is for illustrative purposes.}
    \label{symscrewlimits}
\end{figure}

Applying the variational equation \eqref{mobiusplateauvar} to helicoidal screws, yields the system
\begin{align}
        4 \alpha \int_{-\infty}^\infty \frac{(A + B)(\cos(\omega v) - 1)}{\left(A^2 - 2AB \cos(\omega v) + B^2 + v^2 \right)^2}dv &= \beta \left( \frac{1}{\sqrt{\omega^2 B^2 + 1}} - \frac{1}{\sqrt{\omega^2 A^2 + 1}}\right). \nonumber \\
         4\alpha \int_{-\infty}^\infty \frac{A(A+1) + B(B-1) + (A - B -2AB) \cos(\omega v) + v^2}{\left(A^2 - 2AB \cos(\omega v) + B^2 + v^2 \right)^2}dv &= -\beta \left( \frac{1}{\sqrt{\omega^2 B^2 + 1}} + \frac{1}{\sqrt{\omega^2 A^2 + 1}}\right).
         \label{mpscrews}
\end{align}

The Plateau gradient is a unit vector field, but the terms on the righthand sides come from the arclength parameters of the integrals. When we were considering only the M\"obius energy, we could divide by these constants to simplify the equation, but we do not have that luxury here. Again, we refer the interested reader to \cite{ln22} for a derivation of this system of equations.

Like before, we can see that setting $A = -B$ sets both sides of the first equation in \eqref{mpscrews} to zero, suggesting there could be stationary symmetric screws. Making this substitution in the second equation yields
\begin{align}
     2\alpha \bigintsss_{-\infty}^\infty \frac{4B(B-1)\cos^2\left(\frac{\omega v}{2}\right) + v^2}{\left(\left(2B\cos\left(\frac{\omega v}{2}\right)\right)^2  + v^2 \right)^2}dv &= \frac{-\beta}{\sqrt{\omega^2B^2 + 1}}.
     \label{symscrewdiff}
\end{align}
The quantity on the right hand side of \eqref{symscrewdiff} is negative, but tends to zero as $\omega \to \infty$. By applying Theorem \ref{MwB}, we can get the following characterization of the asymptotics of M\"obius-Plateau stationary screws, as depicted in Figure \ref{symscrewlimits}.

\begin{theorem}
If $\omega_i$ is a sequence of frequencies tending to infinity, then there exists a sequence $B_i$ tending to $\frac{1}{2}$ from below such that for all but finitely many $i$, the symmetric screw with radius $B_i$ and frequency $\omega_i$ is stationary under the M\"obius-Plateau energy.
\label{symscrewtheorem}
\end{theorem}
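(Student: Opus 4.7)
The plan is to view the stationarity condition \eqref{symscrewdiff} as a root-finding problem in $B$ and apply the Intermediate Value Theorem for each large $\omega_i$. Rearranging, I seek zeros of
\begin{equation*}
f(\omega, B) = 2\alpha M(\omega, B) + \frac{\beta}{\sqrt{\omega^2 B^2 + 1}},
\end{equation*}
a function continuous in $B$ by dominated convergence on the integrand of $M$ (an integrable majorant of the form $(|4B(B-1)| + v^2)/v^4$ works on any compact sub-interval of $B > 0$). Theorem \ref{MwB} supplies the essential asymptotics: $M(\omega, B) \to -\infty$ for $B < 1/2$, $M(\omega, B) \to 0$ at $B = 1/2$, and $M(\omega, B) \to +\infty$ for $B > 1/2$, while the second summand of $f$ is a positive quantity of order $1/\omega$.

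Fix $\delta \in (0, 1/2)$. For all sufficiently large $\omega_i$, Theorem \ref{MwB} yields $f(\omega_i, 1/2 - \delta) < 0$, since the $M$-term diverges to $-\infty$ while the other stays bounded by $\beta$. If I can also establish $f(\omega_i, 1/2) > 0$ for all large $\omega_i$, the IVT produces a root $B_i(\delta) \in (1/2 - \delta,\, 1/2)$, and a standard diagonal argument over $\delta \to 0^+$ then delivers the required sequence $B_i \to 1/2^-$ of stationary radii.

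The main obstacle is precisely this last positivity: at $B = 1/2$ both summands of $f$ tend to $0$, so I would need to sharpen Theorem \ref{MwB} to show $M(\omega, 1/2) = o(1/\omega)$, ensuring that the positive term $\beta/\sqrt{\omega^2/4 + 1}$ dominates. My strategy is to revisit the residue-sum argument in the proof of Theorem \ref{MwB}: each limiting residue carries the vanishing prefactor $(2B - 1)/B^2$, so the residual size of $M(\omega, 1/2)$ stems only from the $O(1/\omega)$ discrepancies between the exact poles $z^{\pm}_n$ and their asymptotic approximations $\lambda^{\pm}_n$, together with the boundary contributions of the truncating rectangles. Summing these small corrections with enough care to offset the tail divergence that is present for $B \ne 1/2$ should give a decay of the form $M(\omega, 1/2) = O(\log \omega / \omega^2)$, which is more than sufficient to close the argument.
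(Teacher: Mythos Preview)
The paper does not actually supply a proof of this theorem; it is stated as a direct corollary of Theorem~\ref{MwB} with only the remark that the right-hand side of \eqref{symscrewdiff} is negative and tends to zero. Your Intermediate Value Theorem approach is exactly the natural way to make that corollary precise, and in that sense you are not diverging from the paper so much as filling in what it left implicit.

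Where you go further---and correctly so---is in recognising that Theorem~\ref{MwB} alone does not give the ``from below'' conclusion. Getting a root in $(1/2-\delta,1/2)$ requires $f(\omega_i,1/2)>0$, and since both summands of $f(\omega,1/2)$ are $o(1)$ this demands a rate on $M(\omega,1/2)$ that Theorem~\ref{MwB} does not provide. The paper glosses over this point entirely.

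Your proposed refinement is plausible but the sketch is too loose in one place. You argue that the limiting residues all carry the factor $(2B-1)$; that is true, but only because $\widetilde F_3$ and $\widetilde F_4$ disappear in the limit via their explicit $1/\omega$ prefactor, leaving only $\widetilde F_1,\widetilde F_2$. At $B=1/2$ the latter vanish \emph{identically}, so $M(\omega,1/2)$ is determined entirely by $\widetilde F_3+\widetilde F_4$. A direct computation (using $\cos z_n = i z_n/(\omega B)$, hence $\cos z_n = O(1/\omega)$ and $\sin z_n \approx \pm 1$) shows the residue of $\widetilde F_3$ at the $n$th pole is of order $n/\omega^2$, so the residue sum is not obviously $o(1/\omega)$ by size alone. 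There is, however, a genuine cancellation mechanism: localising near each zero of $\cos u$, the model integral $\int_{\mathbb R}(a^2-s^2)/(a^2+s^2)^2\,ds$ vanishes, which suggests that the leading $O(1/\omega)$ contributions from $\widetilde F_3$ and $\widetilde F_4$ do cancel. Making that cancellation quantitative and controlling the tail is the real work needed to justify $M(\omega,1/2)=o(1/\omega)$; it is not simply a matter of bounding pole discrepancies, and it goes beyond anything the paper establishes.
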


\section*{Acknowledgements}
The author would like to thank Xin Zhou for his numerous helpful discussions with this work, and in particular making the initial suggestion to consider the M\"obius-Plateau energy. He would also like to thank Steven Strogatz and Gokul Nair for their helpful comments.
\\
\\
\noindent This work is partially funded by an NSF RTG grant entitled Dynamics, Probability and PDEs in Pure and Applied Mathematics, DMS-1645643.

\bibliographystyle{unsrt}
\bibliography{ref}

\begin{thebibliography}{10}

\bibitem{ohara91}
Jun O’Hara.
\newblock Energy of a knot.
\newblock {\em Topology}, 30(2):241–247, 1991.

\bibitem{freedman1994mobius}
Michael~H Freedman, Zheng-Xu He, and Zhenghan Wang.
\newblock M\"obius energy of knots and unknots.
\newblock {\em Annals of Mathematics}, pages 1--50, 1994.

\bibitem{he02}
Zheng-Xu He.
\newblock On the minimizers of the \text{Möbius} cross energy of links.
\newblock {\em Experimental Mathematics}, 11(2):243–248, 2002.

\bibitem{ln22}
Max Lipton and Gokul Nair.
\newblock Stationary curves under the \text{M\"obius-Plateau} energy.
\newblock {\em Preprint}, 2022.
\newblock \url{https://arxiv.org/abs/2208.12678}.

\bibitem{frankel37}
Sidney Frankel.
\newblock Complete approximate solutions of the equation $x = \tan x$.
\newblock {\em National Mathematics Magazine}, 11(4):177, 1937.

\bibitem{colding2011course}
Tobias~H Colding and William~P Minicozzi.
\newblock {\em A course in minimal surfaces}, volume 121.
\newblock American Mathematical Soc., 2011.

\bibitem{matsutani2010euler}
Shigeki Matsutani.
\newblock Euler's elastica and beyond.
\newblock {\em Journal of Geometry and Symmetry in Physics}, 17:45--86, 2010.

\bibitem{giusteri2017solution}
Giulio~G Giusteri, Luca Lussardi, and Eliot Fried.
\newblock Solution of the \text{Kirchhoff-Plateau} problem.
\newblock {\em Journal of Nonlinear Science}, 27(3):1043--1063, 2017.

\bibitem{bernatzki2001minimal}
Felicia Bernatzki and Rugang Ye.
\newblock Minimal surfaces with an elastic boundary.
\newblock {\em Annals of Global Analysis and Geometry}, 19(1):1--9, 2001.

\bibitem{blatt20}
Simon Blatt.
\newblock The gradient flow of the \text{Möbius} energy: $\vep$-regularity and
  consequences.
\newblock {\em Analysis \& PDE}, 13(3):901–941, 2020.

\end{thebibliography}

\Addresses
\end{document}